\patchcmd{\@settitle}{\uppercasenonmath\@title}{}{}{}
\patchcmd{\@setauthors}{\MakeUppercase}{}{}{}
\newtheorem{thm}{Theorem}
\newtheorem{cor}[thm]{Corollary}
\newcommand{\R}{\mathbb{R}}
\newcommand{\E}{\mathbb{E}}
\newcommand{\inprod}[2]{\left\langle #1, #2 \right\rangle}
\renewcommand{\P}{\mathbb{P}}
\newcommand{\s}{\mathbb{S}}
\begin{document}

\title[Projections of probability distributions]{\sc Projections of  probability distributions:\\
A measure-theoretic Dvoretzky theorem}
\author{\sc Elizabeth Meckes}
\thanks{Research supported by an American Institute of Mathematics 
Five-year Fellowship and NSF grant DMS-0852898.}

\begin{abstract}
Many authors have studied the phenomenon of typically Gaussian
marginals of high-dimensional random vectors; e.g., for a 
probability measure on $\R^d$, under mild conditions, 
most one-dimensional marginals
are approximately Gaussian if $d$ is large.  In earlier work, the
author used entropy techniques and Stein's method to show that this
phenomenon persists in the bounded-Lipschitz distance 
for $k$-dimensional marginals of $d$-dimensional
distributions,  if $k=o(\sqrt{\log(d)})$.  In this
paper, a somewhat different approach is used to show that the 
phenomenon persists if $k<\frac{2\log(d)}{\log(\log(d))}$, and
  that this estimate is best possible.  
\end{abstract}

\maketitle


\section{Introduction}

The explicit study of typical behavior of the margins of
high-dimensional probability measures goes back to Sudakov \cite{Sud},
although some of the central ideas appeared much earlier; e.g., the
1906 monograph \cite{Bor} of Borel, which contains the first rigorous
proof that projections of uniform measure on the $n$-dimensional
sphere are approximately
Gaussian for large $n$.  Subsequent major contributions
were made by Diaconis and Freedman \cite{DF},
von Weizs\"acker \cite{vonW}, Bobkov \cite{Bob}, and
Klartag \cite{Kla1}, among others.  The objects of study are a random
vector $X\in\R^d$ and its projections onto subspaces; the central
problem here is to show that for most subspaces, the resulting
distributions are about the same, approximately Gaussian, and moreover
to determine how large the dimension $k$ of the subspace may be
relative to $d$ for this phenomenon to persist.  This aspect in
particular of the problem was addressed in earlier work \cite{JOTP} of
the author.  In this paper, a  different approach is presented
to proving the main result of \cite{JOTP}, which, in addition to being
technically simpler and perhaps more geometrically natural, also gives
a noticable quantiative improvement.  The result shows that the
phenomenon of typical Gaussian marginals persists under mild
conditions for $k<\frac{2\log(d)}{\log( \log(d))}$, 
as opposed to
the results of \cite{JOTP}, which requires
$k=o(\sqrt{\log(d)})$ (note
that a misprint in the abstract of that paper claimed that
$k=o\left(\log(d)\right)$ was sufficient).  

The fact that typical $k$-dimensional projections of 
 probability measures on $\R^d$ are
approximately Gaussian when $k<\frac{2\log(d)}{\log(
  \log(d))}$ can be viewed as a measure-theoretic version of a
famous theorem of Dvoretzky \cite{Dvo}, V.\ Milman's proof of which
\cite{Mil} shows that for $\epsilon>0$ fixed and $\mathcal{X}$ a
$d$-dimensional Banach space, typical $k$-dimensional subspaces
$E\subseteq \mathcal{X}$ are $(1+\epsilon)$-isomorphic to a Hilbert
space, if $k\le C(\epsilon)\log(d)$.  (This is the usual formulation,
although one can give a dual formulation in terms of projections and
quotient norms rather
than subspaces.) These results should be viewed as analogous, in
the following sense: in both cases, an
additional structure is imposed on $\R^n$ (a norm
in the case of Dvoretzky's theorem; a probability measure in the present
context); in either case, there is a particularly nice way to do this 
(the Euclidean norm and the Gaussian distribution, respectively).  The
question is then: if one projects an arbitrary norm or probability
measure onto lower dimensional subspaces, does it tend to resemble this
nice structure?  If so, by how much must one reduce the dimension in order
to see this phenomenon?

Aside from the philosophical
similarity of these results, they are also similar in that additional
natural geometric assumptions lead to better behavior under
projections.  The main result of Klartag \cite{Kla2} shows that if the
random vector $X\in\R^d$ is assumed to have a log-concave
distribution, then typical marginals of the distribution of $X$ are
approximately Gaussian even when $k=d^{\epsilon}$ (for a specific
universal constant $\epsilon\in(0,1)$).  This should be compared in
the context of Dvoretzky's theorem to, for example, the result of
Figiel, Lindenstrauss and V.\ Milman \cite{flm} showing that if a
$d$-dimensional Banach space $\mathcal{X}$ has cotype
$q\in[2,\infty)$, then $\mathcal{X}$ has subspaces of dimension of the
  order $d^{\frac{2}{q}}$ which are approximately Euclidean; or the result
of Szarek \cite{Sza} showing that if $\mathcal{X}$ has bounded volume
ratio, then $\mathcal{X}$ has nearly Euclidean subspaces of dimension
$\frac{d}{2}$.  One interesting difference in the measure-theoretic
context from the classical context is that, for measures, it is possible
to determine {\em which} subspaces have approximately Gaussian projections 
under symmetry assumptions on the measure (see M.\ Meckes \cite{MM}); 
there is no known method to find explicit almost Euclidean subspaces of
Banach spaces, even under natural geometric
assumptions such as symmetry properties.

Following the statements of the main
results below, an example is given to show that the estimate
$k<\frac{2\log(d)}{\log(\log(d))}$ is best possible in 
the metric used here.

Before formally stating the results, some notation and context are 
needed.  The Stiefel manifold
$\mathfrak{W}_{d,k}$ is defined by 
$$
\mathfrak{W}_{d,k}:=\{\theta=(\theta_1,\ldots,\theta_k):\theta_i\in\R^d,
\inprod{\theta_i}{\theta_j}=\delta_{ij} \,\forall\, 1\le i,j\le k\},$$
with metric
$\rho\big(\theta,\theta^\prime\big)=\left[\sum_{j=1}^k|\theta_j-
  \theta_j^\prime|^2\right]^{1/2}$.  The manifold $\mathfrak{W}_{d,k}$
posseses a rotation-invariant (Haar) probability measure.

Let $X$ be a random vector in $\R^d$ and let $\theta\in\mathfrak{W}_{d,k}$.
Let
$$X_\theta:=\big(\inprod{X}{\theta_1},\ldots,\inprod{X}{\theta_k}\big);$$
that is, $X_\theta$ is the projection of $X$ onto the span of $\theta$.
Consider also the ``annealed'' version $X_\Theta$ for $\Theta
\in\mathfrak{W}_{d,k}$ distributed according to Haar measure and independent
of $X$.
The notation $\E_X[\cdot]$ is used to denote expectation with respect to 
$X$ only; that is, $\E_X[f(X,\Theta)]=\E\left[f(X,\Theta)\big|\Theta\right].$
When $X_\Theta$ is being thought of as conditioned on $\Theta$ with 
randomness coming from $X$ only, it is written $X_\theta$.
The following results describe the behavior of the random variables $X_\theta$
and $X_\Theta$.  In what follows, $c$ and $C$ are used to denote universal
constants which need not be the same in every appearance.

\begin{thm}\label{meanX}
Let $X$ be a random vector in $\R^n$, with $\E X=0$, $\E\left[|X|^2
\right]=\sigma^2d$, and let $A:=\E\big||X|^2\sigma^{-2}-d\big|$.  
If $\Theta$ is a random point of $\mathfrak{W}_{d,k}$, $X_\Theta$
is defined as above, and $Z$ is a standard Gaussian random vector, then
$$d_{BL}(X_\Theta,\sigma Z)\le \frac{\sigma[\sqrt{k}(A+1)+ k]}{d-1}.$$
\end{thm}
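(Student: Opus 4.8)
The plan is to reduce the problem to two well-understood pieces---the fluctuation of the radius $|X|$ and the comparison of a spherical projection with a Gaussian---by exploiting the rotation invariance of Haar measure on $\mathfrak{W}_{d,k}$. The key observation is a distributional identity: conditionally on $X$, the vector $X_\Theta$ depends on $X$ only through $|X|$, since for a fixed unit vector $e$ the random vector $(\inprod{e}{\Theta_1},\dots,\inprod{e}{\Theta_k})$ is, by invariance, distributed as the projection onto the first $k$ coordinates of a uniform random point on $S^{d-1}$. Writing $U$ for this spherical projection, one obtains $X_\Theta \stackrel{d}{=} |X|\,U$ with $U$ independent of $|X|$. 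It is convenient to realize everything on one probability space through an auxiliary standard Gaussian vector $W\in\R^d$ independent of $X$: since $W/|W|$ is uniform on $S^{d-1}$, we may take $U = |W|^{-1}(W_1,\dots,W_k)$, and then $\sigma Z \stackrel{d}{=} \sigma(W_1,\dots,W_k)$ as well, so that all three laws are coupled through the single Gaussian $W$.

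First I would peel off the radius. By the triangle inequality for $d_{BL}$,
\[
d_{BL}(X_\Theta,\sigma Z) \le d_{BL}\big(|X|\,U,\ \sigma\sqrt{d}\,U\big) + \sigma\, d_{BL}\big(\sqrt{d}\,U,\ Z\big).
\]
The first term compares two laws sharing the same $U$, so testing against a function $f$ with $\|f\|_{BL}\le 1$ and using that $f$ is $1$-Lipschitz gives the bound $\E\big[\,\big||X|-\sigma\sqrt{d}\,\big|\,|U|\,\big] = \E\big||X|-\sigma\sqrt{d}\,\big|\cdot\E|U|$, by independence of $|X|$ and $U$. The elementary inequality $|a-b|\le|a^2-b^2|/b$ (with $a=|X|$, $b=\sigma\sqrt d$) bounds the first factor by $\E\big||X|^2-\sigma^2 d\big|/(\sigma\sqrt d) = \sigma A/\sqrt d$, while $\E|U|\le(\E|U|^2)^{1/2}=\sqrt{k/d}$ since $\E|U|^2 = k/d$. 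This produces the term $\sigma A\sqrt{k}/d$, accounting for the dependence on $A$.

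The remaining term, $\sigma\,d_{BL}(\sqrt{d}\,U,Z)$, is the crux, and it is where the spherical geometry must be used with care. Both $\sqrt{d}\,U$ and $Z$ have mean zero and covariance exactly $I_k$, so all the discrepancy is of higher order. The main obstacle is that a naive Lipschitz coupling is too weak here: writing $\sqrt{d}\,U = \lambda\,V$ with $V=(W_1,\dots,W_k)$ and $\lambda = \sqrt{d}/|W|$, the bound $\E\big[|\lambda-1|\,|V|\big]$ is only of order $\sqrt{k/d}$, since $\lambda-1$ fluctuates on the scale $1/\sqrt{d}$ while $|V|\approx\sqrt{k}$; this loses a full factor of $\sqrt{d}$ against the claimed rate. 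To recover the sharp $k/d$ behaviour one must not pass to absolute values prematurely but instead exploit the cancellation in $\E\big[(\lambda-1)\,\inprod{\nabla f(V)}{V}\big]$, which I would extract by Gaussian integration by parts in the variable $W$ (equivalently, by the exact spherical moment identities that also produce the denominator $d-1$). The boundedness $\|f\|_\infty\le 1$ is used to control the remainder and the event that $|W|$ is atypically small, while the matching of the first two moments guarantees that the surviving contributions are genuinely of orders $\sqrt{k}/(d-1)$ and $k/(d-1)$; the former combines with the radius term to give $\sqrt{k}(A+1)$ and the latter supplies the isolated $k$. Assembling the three estimates yields the stated bound.
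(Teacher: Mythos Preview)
The paper does not contain a proof of this theorem: it states explicitly that Theorems~\ref{meanX} and~\ref{concX} were proved in~\cite{JOTP} and that their proofs will not be reproduced. The abstract indicates that the argument there proceeds via Stein's method, so there is nothing in the present paper against which to compare your proposal in detail.

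On its own merits, your outline is sound in structure but incomplete at the decisive step. The distributional identity $X_\Theta\stackrel{d}{=}|X|\,U$ and the triangle-inequality split are natural, and your treatment of the radius term is correct (indeed it yields $\sigma A\sqrt{k}/d$, slightly sharper than the stated $\sigma\sqrt{k}A/(d-1)$); one minor slip is that $d_{BL}(\sigma\sqrt{d}\,U,\sigma Z)$ is not literally $\sigma\,d_{BL}(\sqrt{d}\,U,Z)$, since rescaling a test function by $\sigma$ need not preserve the unit ball of $\|\cdot\|_{BL}$, though this is easily repaired. The genuine gap is in what you yourself call ``the crux.'' You correctly diagnose why the naive coupling bound $\E\big[|\lambda-1|\,|V|\big]$ loses a factor of $\sqrt{d}$, and you correctly point toward integration by parts or Stein identities to exploit the second-moment matching. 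But you do not carry any of this out: the sentence asserting that ``the surviving contributions are genuinely of orders $\sqrt{k}/(d-1)$ and $k/(d-1)$'' is the desired conclusion, not an argument for it. Obtaining the sharp bound on $d_{BL}(\sqrt{d}\,U,Z)$ with the specific $d-1$ denominator and the separate $\sqrt{k}$ and $k$ contributions is precisely the substantive work the theorem requires --- it is essentially what the Stein's-method computation in~\cite{JOTP} supplies --- and your proposal reduces the theorem to this and then asserts it without proof.
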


\begin{thm}\label{concX}Let $Z$ be a standard Gaussian random vector.  Let
\[B:=\sup_{\xi\in\s^{d-1}}\E\inprod{X}{\xi}^2.\]
For $\theta\in\mathfrak{W}_{d,k}$, let
$$d_{BL}(X_\theta,\sigma
  Z)=\sup_{\max(\|f\|_\infty,|f|_L)\le 1}\left|\E\Big[ f(\inprod{X}{\theta_1},\ldots,\inprod{X}{
\theta_k})\big|\theta\right]-
  \E f(\sigma Z_1,\ldots,\sigma Z_k)\Big|;$$ that is,
  $d_{BL}(X_\theta,\sigma Z)$ is the conditional bounded-Lipschitz distance
  from $X_\Theta$ to $\sigma Z$, conditioned on $\Theta$.  Then
if $\P_{d,k}$ denotes the Haar measure on $\mathfrak{W}_{d,k}$,
$$\P_{d,k}\left[\theta:\big|d_{BL}(X_\theta,\sigma Z)-
\E d_{BL}(X_\theta,\sigma Z)\big|>\epsilon\right]\le Ce^{-\frac{cd\epsilon^2}{B}}.$$

\end{thm}

\begin{thm}\label{dist-bdX}With notation as in the previous theorems, 
\[\E d_{BL}(X_\theta,\sigma Z)
\le C \left[\frac{(kB+B\log(d))B^{\frac{2}{9k+12}}}{(kB)^{\frac{2}{3}}
d^{\frac{2}{3k+4}}}+\frac{\sigma[\sqrt{k}(A+1)+ k]}{d-1}\right].\]
In particular, under the additional assumptions that
$A\le C'\sqrt{d}$ and $B=1$, then 
\[\E d_{BL}(X_\theta,\sigma Z)
\le C \frac{k+\log(d)}{k^{\frac{2}{3}}
d^{\frac{2}{3k+4}}}.\]
\end{thm}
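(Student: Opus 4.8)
The plan is to bound the supremum defining $d_{BL}(X_\theta,\sigma Z)$ by a maximum over a \emph{finite} net of test functions, controlling each net function by combining the annealed estimate of Theorem~\ref{meanX} (which centers it) with the concentration mechanism behind Theorem~\ref{concX} (which controls its fluctuation in $\theta$). For a test function $f$ with $\|f\|_\infty\le 1$ and $|f|_L\le 1$, set $h_f(\theta):=\E_X f(X_\theta)$, so that $d_{BL}(X_\theta,\sigma Z)=\sup_f|h_f(\theta)-\E f(\sigma Z)|$. Its $\Theta$-average is the annealed quantity $\E h_f(\Theta)=\E f(X_\Theta)$, which Theorem~\ref{meanX} places within $\beta:=\frac{\sigma[\sqrt k(A+1)+k]}{d-1}$ of $\E f(\sigma Z)$. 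For the fluctuation, the key geometric point is that the gradient of $h_f$ on $\mathfrak W_{d,k}$ satisfies $\sum_{i}|\nabla_{\theta_i}h_f|^2\le B\,\E\big[\sum_i(\partial_if)^2\big]\le B|f|_L^2\le B$, since each $\theta_i\in\s^{d-1}$ and $B=\sup_{\xi\in\s^{d-1}}\E\inprod{X}{\xi}^2$; thus $h_f$ is $\sqrt B$-Lipschitz in $\rho$, and the estimate underlying Theorem~\ref{concX}, applied to a single $f$, gives $\P_{d,k}\big[|h_f(\theta)-\E h_f|>t\big]\le Ce^{-cdt^2/B}$.

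Because a general $1$-Lipschitz $f$ cannot be approximated uniformly on all of $\R^k$, I first truncate to the ball of radius $R$: replacing $f$ by a compactly supported version changes $h_f(\theta)$ by at most $\P_X[|X_\theta|>R]\le kB/R^2$ (Markov, using $\E_X|X_\theta|^2=\sum_i\E_X\inprod{X}{\theta_i}^2\le kB$ uniformly in $\theta$) and changes $\E f(\sigma Z)$ by at most $\P[|\sigma Z|>R]$. On that ball the class of $1$-Lipschitz functions bounded by $1$ admits a $\delta$-net $\mathcal N$ in the uniform norm with $\log|\mathcal N|\le C(R/\delta)^k$, and approximating each $f$ by its nearest $\tilde f\in\mathcal N$ costs a further $\delta$. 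Applying the sub-Gaussian maximal inequality to the $|\mathcal N|$ centered fluctuations yields $\E_\Theta\max_{\tilde f\in\mathcal N}|h_{\tilde f}(\theta)-\E h_{\tilde f}|\le C\sqrt{B(R/\delta)^k/d}$, while the centering of each $\tilde f$ is supplied by $\beta$ through Theorem~\ref{meanX}.

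Assembling these pieces gives
\[
\E d_{BL}(X_\theta,\sigma Z)\ \lesssim\ \beta+\delta+\frac{kB}{R^2}+\sqrt{\frac{B(R/\delta)^k}{d}}+\P\big[|\sigma Z|>R\big].
\]
It remains to optimize $R$ and $\delta$. Balancing the approximation error $\delta$, the second-moment tail $kB/R^2$, and the net fluctuation $\sqrt{B(R/\delta)^k/d}$ is a three-way balance: solving $\delta\asymp kB/R^2\asymp\sqrt{B/d}\,(R/\delta)^{k/2}$ forces $R/\delta$ to grow like a power of $d^{1/k}$ and produces precisely the exponents $d^{-2/(3k+4)}$ and $B^{2/(9k+12)}$, with optimal value $\asymp (kB)^{1/3}B^{2/(9k+12)}d^{-2/(3k+4)}$; the remaining requirement that $R$ be large enough to render the Gaussian tail $\P[|\sigma Z|>R]$ negligible forces $R^2\gtrsim B\log d$, which is what upgrades the prefactor from $kB$ to $kB+B\log d$. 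The term $\beta$ is exactly the second bracketed expression, and under $A\le C'\sqrt d$ and $B=1$ one has $\sigma\le1$ and $\beta\lesssim\sqrt{k/d}$, which is dominated by the first term throughout the admissible range of $k$, leaving the stated bound $C\frac{k+\log d}{k^{2/3}d^{2/(3k+4)}}$.

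The main obstacle is the uniform control of the supremum over the infinite-dimensional Lipschitz class. The covering number $(R/\delta)^k$ is exponential in the dimension $k$, so the fluctuation term $\sqrt{B(R/\delta)^k/d}$ is small only when $k\log(R/\delta)$ stays below $\log d$; since the optimization forces $\log(R/\delta)$ of order $(\log d)/k$ up to a $\log\log d$ factor, this tension is exactly what caps the dimension at $k<\frac{2\log d}{\log\log d}$. Getting the entropy, approximation, and truncation tradeoff sharp enough to see this threshold, rather than a cruder one, is the delicate step; everything else reduces to the two already-established theorems and standard maximal and covering-number estimates.
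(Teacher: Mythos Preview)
Your approach is correct and takes a genuinely different route from the paper's. Both arguments share the same skeleton---view $\theta\mapsto\E_X f(X_\theta)$ as a $\sqrt{B}\,|f|_L$-Lipschitz function on the Stiefel manifold (hence sub-Gaussian in $\theta$), truncate test functions to a ball of radius $R$ at cost $kB/R^2$, and then discretize---but they diverge at the discretization step. The paper performs an explicit piecewise-linear approximation on a simplicial lattice, which embeds the truncated Lipschitz ball into $2B_\infty^M$ with $M\asymp (R/\epsilon)^k$, and then applies Dudley's entropy bound, producing $\log(M)\sqrt{MB/d}$; the extra $\log(M)$ is what becomes the $\log d$ in the numerator after optimization. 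You instead take a single $\delta$-net in $L^\infty$ (Kolmogorov--Tikhomirov: $\log|\mathcal N|\lesssim (R/\delta)^k$) and use the finite maximal inequality $\E\max_{i\le N}|Y_i|\lesssim\sigma\sqrt{\log N}$ directly; this works because the $L^\infty$ approximation gives the \emph{deterministic} bound $|X_f-X_{\tilde f}|\le 2\delta$, so no chaining is needed to control the residual. Your route is technically simpler and in fact slightly sharper: the three-way balance already yields $\asymp(B/d)^{2/(3k+4)}(kB)^{k/(3k+4)}$ with no $\log d$ prefactor. One small correction: your attribution of the $\log d$ term to the Gaussian tail $\P[|\sigma Z|>R]$ is off---since $\sigma^2\le B$, Chebyshev gives $\P[|\sigma Z|>R]\le k\sigma^2/R^2\le kB/R^2$, so this tail is already absorbed into the truncation term and imposes no new constraint. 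The $\log d$ in the stated theorem arises purely from the paper's chaining overhead, and your bound implies the theorem with room to spare.
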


\noindent {\bf Remark:} The assumption that $B=1$ is automatically
satisfied if the covariance matrix of $X$ is the identity; in the
language of convex geometry, this is simply the case that the 
vector $X$ is isotropic.  The assumption that $A=O(\sqrt{d})$
is a geometrically natural one which arises, for example, if $X$ is 
distributed uniformly on the isotropic dilate of the $\ell_1$ ball in 
$\R^d$.

\medskip

Together, Theorems \ref{concX} and \ref{dist-bdX} give the 
following.  

\begin{cor}\label{summary}  Let $X$ be a random vector in $\R^d$
satisfying \[\E|X|^2=\sigma^2d\qquad\E||X|^2\sigma^{-2}-d|\le L\sqrt{d} \qquad 
\sup_{\xi\in\s^{d-1}}\E\inprod{\xi}{X}^2\le 1.\]
Let  $X_\theta$ denote the projection of $X$ onto the span of $\theta$,
for $\theta\in\mathfrak{W}_{d,k}$.  Fix $a>0$ and $b<2$
and suppose 
that  
$k=\delta\frac{\log(d)}{\log(\log(d))}$ with $a\le\delta\le b$.
Then  there is a $c>0$ depending only on $a$ and $b$ such that
for \[\epsilon=2\exp\left[-c\frac{\log(\log(d))}{
\delta}\right],\]
there is a subset 
$\mathfrak{T}\subseteq\mathfrak{W}_{d,k}$ with $\P_{d,k}[\mathfrak{T}]\ge
1-C\exp\left(-c'd\epsilon^2\right)$, such that for all $\theta\in\mathfrak{T}$,
\[d_{BL}(X_\theta,\sigma Z)\le C'\epsilon.\]
\end{cor}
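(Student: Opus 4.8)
The plan is to feed the expectation bound of Theorem~\ref{dist-bdX} into the concentration estimate of Theorem~\ref{concX}; once the two are in hand the corollary follows from the triangle inequality, so the real content is an asymptotic analysis of the bound of Theorem~\ref{dist-bdX} under the scaling $k=\delta\log(d)/\log(\log(d))$.

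First I would check that the hypotheses place us in the clean regime. The assumption $\sup_{\xi\in\s^{d-1}}\E\inprod{\xi}{X}^2\le 1$ says $B\le 1$; since $\E|X|^2=\sigma^2 d$ is the trace of $\E[XX^\top]$, whose largest eigenvalue is $B$, we get $\sigma^2 d\le dB\le d$, hence $\sigma\le 1$. The assumption $\E\big||X|^2\sigma^{-2}-d\big|\le L\sqrt d$ gives $A\le L\sqrt d$. Substituting $B\le 1$ into the general bound of Theorem~\ref{dist-bdX} and using $B^{1/3}\le 1$ and $B^{2/(9k+12)}\le 1$ bounds the first bracketed term by $C(k+\log d)/(k^{2/3}d^{2/(3k+4)})$, while the second term is $\le C\sigma[\sqrt{k}\,L\sqrt d+k]/(d-1)=O(\sqrt{k/d})$, which is negligible against any polylogarithmically small quantity. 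Thus
$$\E d_{BL}(X_\theta,\sigma Z)\le C\,\frac{k+\log d}{k^{2/3}\,d^{2/(3k+4)}}.$$

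The heart of the argument is the next step. Writing $u=\log d$ and $w=\log\log d$, the bound $k=\delta u/w\le bu/w$ gives $k+\log d\sim\log d$ and $k\to\infty$ (using $\delta\ge a$), so the $+4$ is negligible and
$$d^{2/(3k+4)}=\exp\!\left(\frac{2u}{3k+4}\right)\approx\exp\!\left(\frac{2w}{3\delta}\right)=(\log d)^{2/(3\delta)}.$$
Combining the three factors yields
$$\E d_{BL}(X_\theta,\sigma Z)\lesssim\frac{(\log d)^{\frac13-\frac{2}{3\delta}}\,(\log\log d)^{2/3}}{\delta^{2/3}},$$
which is to be compared with $\epsilon=2\exp(-cw/\delta)=2(\log d)^{-c/\delta}$. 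The ratio $\E d_{BL}(X_\theta,\sigma Z)/\epsilon$ then carries $\log d$ to the power $\tfrac13+\tfrac{3c-2}{3\delta}$ times the subpolynomial factor $(\log\log d)^{2/3}\delta^{-2/3}$. Choosing any $c<\frac{2-b}{3}$ forces $3c-2<-b\le-\delta$, so this exponent is at most $\frac13+\frac{3c-2}{3b}=:-\eta<0$ \emph{uniformly} over $\delta\in[a,b]$; the lower bound $\delta\ge a$ controls the prefactor through $\delta^{-2/3}\le a^{-2/3}$, and $(\log d)^{-\eta}$ swamps $(\log\log d)^{2/3}$. Hence $\E d_{BL}(X_\theta,\sigma Z)\le\epsilon$ for all large $d$ (small $d$ being absorbed into $C'$). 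This is precisely where the ceiling $b<2$ enters: positivity of $c$ requires $\frac{2-b}{3}>0$, which is the measure-theoretic Dvoretzky threshold.

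Finally I would set $\mathfrak{T}:=\{\theta:|d_{BL}(X_\theta,\sigma Z)-\E d_{BL}(X_\theta,\sigma Z)|\le\epsilon\}$. Theorem~\ref{concX} with $B\le1$ gives $\P_{d,k}[\mathfrak{T}]\ge 1-Ce^{-cd\epsilon^2/B}\ge 1-Ce^{-c'd\epsilon^2}$, and for $\theta\in\mathfrak{T}$ the triangle inequality gives $d_{BL}(X_\theta,\sigma Z)\le\E d_{BL}(X_\theta,\sigma Z)+\epsilon\le 2\epsilon\le C'\epsilon$, as claimed. I expect the only genuinely delicate point to be the uniform asymptotic bookkeeping of the middle step---pinning down the admissible range $c<(2-b)/3$ and verifying the exponent estimate is uniform in $\delta\in[a,b]$; the reduction to the clean bound and the final concentration-plus-triangle-inequality step are routine.
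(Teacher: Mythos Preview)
Your proposal is correct and is exactly the argument the paper has in mind: the author states only that ``the proof of Corollary~\ref{summary} is essentially immediate from Theorems~\ref{concX} and~\ref{dist-bdX},'' and your write-up supplies precisely those details---reducing to the clean bound via $B\le 1$ and $A\le L\sqrt d$, performing the asymptotic bookkeeping on $\frac{k+\log d}{k^{2/3}d^{2/(3k+4)}}$ under $k=\delta\log d/\log\log d$ to extract the admissible range $c<(2-b)/3$, and finishing with concentration plus the triangle inequality. Your identification of the uniformity over $\delta\in[a,b]$ as the only point requiring care is on target; the rest is routine, as the paper asserts.
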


\smallskip

\noindent {\bf Remark:} For the bound on $\E d_{BL}(X_\theta,\sigma Z)$
given in \cite{JOTP} to tend to zero as $d\to\infty$, it is necessary that
$k=o(\sqrt{\log(d)})$, whereas Theorem \ref{dist-bdX} gives a
similar result if $k=\delta\left(\frac{\log(d)}{\log(\log(d))}\right)$
for $\delta<2$.  
Moreover, the following example shows that
the bound above is best possible in our metric.

\medskip

\subsection{Sharpness}

In the presence of log-concavity of the distribution of $X$, Klartag
\cite{Kla2} proved a stronger result than Corollary \ref{summary} above; namely,
that the typical total variation distance between $X_\theta$ and the
corresponding Gaussian distribution is small even when $\theta\in
\mathfrak{W}_{d,k}$ and $k=d^{\epsilon}$ (for a specific universal constant 
$\epsilon\in(0,1)$).  The result above allows $k$ to 
grow only a bit more slowly than 
logarithmically with $d$.  However, as the following example
shows, either the log-concavity or some other additional assumption is
necessary; with only the assumptions here, logarithmic-type growth
of $k$ in $d$ is best possible for the bounded-Lipschitz metric.  
(It should be noted that the specific
constants appearing in the results above are almost certainly non-optimal.)

Let $X$ be distributed uniformly among $\{\pm\sqrt{d}e_1,\ldots,\pm\sqrt{d}
e_d\}$, where the $e_i$ are the standard basis vectors of $\R^d$.  That
is, $X$ is uniformly distributed on the vertices of a cross-polytope.
Then $\E[X]=0$, $|X|^2\equiv d$,
and given $\xi\in\s^{d-1}$, $\E\inprod{X}{\xi}^2=1$; Theorems \ref{meanX},
\ref{concX} and \ref{dist-bdX} apply with $\sigma^2=1$, $A=0$ and $B=1$.

Consider a projection of $\{\pm\sqrt{d}e_1,\ldots,\pm\sqrt{d}
e_d\}$ onto a random subspace $E$ of dimension $k$, and
define the Lipschitz function $f:E\to\R$ by 
$f(x):=\left(1-d(x,S_E)\right)_+,$
where $S_E$ is the image of $\{\pm\sqrt{d}e_1,\ldots,\pm\sqrt{d}
e_d\}$ under projection onto $E$ and $d(x,S_E)$ denotes the (Euclidean)
distance from the point $x$ to the set $S_E$.  Then if $\mu_{S_E}$ denotes
the probability measure putting equal mass at each of the points of $S_E$, 
$\int fd\mu_{S_E}=1$.  On the other hand, it is classical
(see, e.g., \cite{Fol}) that the volume $\omega_k$ of the unit ball in
$\R^k$ is asymptotically given by $\frac{\sqrt{2}}{\sqrt{k\pi}}\left[
\frac{2\pi e}{k}\right]^{\frac{k}{2}}$ for large $k$, in the sense that the
ratio tends to one as $k$ tends to infinity.  It follows that 
the standard Gaussian measure of a ball 
of radius 1 in $\R^k$ is bounded by $\frac{1}{(2\pi)^{k/2}}\omega_k\sim
\frac{\sqrt{2}}{\sqrt{k\pi}}\left[\frac{e}{k}\right]^{\frac{k}{2}}$.
If $\gamma_k$ denotes the standard Gaussian measure in $\R^k$, then this
estimate means that $\int fd\gamma_k\le \frac{2\sqrt{2}d}{\sqrt{k\pi}}
\left[\frac{e}{k}\right]^{\frac{k}{2}}$.  
Now, if $k=\frac{c\log(d)}{\log(
\log(d))}$ for $c>2$, 
then this bound tends to zero, and thus $d_{BL}(\mu_{S_E},\gamma_k)$ is
close to 1 for any choice of the subspace $E$; the measures $\mu_{S_E}$
are far from Gaussian in this regime.  

Taken together with Corollary \ref{summary}, this shows that the 
phenomenon of typically Gaussian marginals persists for $k=\frac{c
\log(d)}{\log(\log(d))}$ for $c<2$, but fails in general if $k=\frac{c\log(d)}{\log(
\log(d))}$ for $c>2$.

Continuing the analogy with Dvoretzky's theorem, it is worth noting here
that, for the projection formulation of Dvoretzky's theorem (the dual 
viewpoint to the slicing version discussed above), the worst case behavior
is achieved for the $\ell_1$ ball, that is, for the convex hull of the points
considered above.

\subsection{Acknowledgements} The author thanks Mark Meckes for many useful
discussions, without which this paper may never have been completed.
Thanks also to Michel Talagrand, who pointed out a simplification in the 
proof of the main theorem.

\section{Proofs}
Theorems \ref{meanX} and \ref{concX} were proved in \cite{JOTP}, and their
proofs will not be reproduced.

This section is mainly devoted to the proof of Theorem \ref{dist-bdX},
but first some more definitions and notation are needed. Firstly, a
comment on distance: as is clear from the statement of Theorems
\ref{concX} and \ref{dist-bdX}, the metric on random variables used here
is the bounded-Lipschitz distance, defined by $d_{BL}(X,Y):=\sup_f\big|\E f(X)-
\E f(Y)\big|$, where the supremum is taken over functions $f$ with 
$\|f\|_{BL}:=\max\{\|f\|_\infty,|f|_L\}\le 1$ ($|f|_L$ is the Lipschitz constant
of $f$).  

\medskip

A centered stochastic process 
$\{X_t\}_{t\in T}$ indexed
by a space $T$ with a metric $d$ is said to satisfy a {\em sub-Gaussian
increment condition} if there is a constant $C$ such that, 
for all $\epsilon>0$,
\begin{equation}\label{subGauss}
\P\big[|X_s-X_t|\ge\epsilon\big]\le C\exp\left(-\frac{\epsilon^2}{2d^2(s,t)}
\right).
\end{equation}

A crucial point for the proof of Theorem \ref{dist-bdX} is that in the 
presence of a sub-Gaussian increment condition, there are powerful tools
availabe to bound the expected supremum of a stochastic 
process; the one used here is
the entropy bound of Dudley \cite{Dud}, formulated in terms of entropy
numbers {\em \`a la} Talagrand \cite{tal}.  
For $n\ge 1$, the {\em entropy number} $e_n(T,d)$ is defined by
$$e_n(T,d):=\inf\{\sup_td(t,T_n):T_n\subseteq T, |T_n|\le 2^{2^n}\}.$$
Dudley's entropy bound is the following.
\begin{thm}[Dudley]\label{ent-thm}
If $\{X_t\}_{t\in T}$ is a centered stochastic process satisfying the
sub-Gaussian increment condition \eqref{subGauss}, then there is a constant $L$
such that 
\begin{equation}\label{ent}
\E\left[\sup_{t\in T}X_t\right]\le L\sum_{n=0}^\infty 2^{n/2}e_n(T,d).
\end{equation}
\end{thm}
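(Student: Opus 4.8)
The plan is to prove this by the classical chaining method. First I would reduce to the case that $T$ is finite: by monotone convergence it suffices to bound $\E[\sup_{t\in F}X_t]$ uniformly over finite subsets $F\subseteq T$ (and to pass to a countable dense subset first if $T$ is not separable), and the entropy numbers of $F$ are dominated by those of $T$. So assume $T$ is finite. For each $n\ge 0$ I would choose a set $T_n\subseteq T$ with $|T_n|\le 2^{2^n}$ nearly realizing the entropy number, so that $\sup_t d(t,T_n)\le 2e_n(T,d)$; since $T$ is finite, $T_n=T$ once $2^{2^n}\ge|T|$. Fix a reference point $t_0$ and take $T_0=\{t_0\}$ (permissible since $2^{2^0}=2$), and let $\pi_n(t)$ denote a nearest point to $t$ in $T_n$.

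Because the process is centered, $\E X_{t_0}=0$, so $\E[\sup_t X_t]=\E[\sup_t(X_t-X_{t_0})]$. For each $t$ the telescoping identity
$$X_t-X_{t_0}=\sum_{n\ge 1}\big(X_{\pi_n(t)}-X_{\pi_{n-1}(t)}\big)$$
holds as a finite sum, since it terminates once $T_n=T$. Taking suprema and using subadditivity of the expected supremum,
$$\E\Big[\sup_t(X_t-X_{t_0})\Big]\le\sum_{n\ge 1}\E\Big[\max_t\big(X_{\pi_n(t)}-X_{\pi_{n-1}(t)}\big)\Big].$$
Next I would bound each term by a sub-Gaussian maximal inequality. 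At level $n$, the increments $X_{\pi_n(t)}-X_{\pi_{n-1}(t)}$ take at most $|T_n|\,|T_{n-1}|\le 2^{2^{n+1}}$ distinct values as $t$ ranges over $T$. For each such pair, the triangle inequality together with the monotonicity $e_n\le e_{n-1}$ gives $d(\pi_n(t),\pi_{n-1}(t))\le 2e_n+2e_{n-1}\le 4e_{n-1}(T,d)$, so by \eqref{subGauss} each increment is sub-Gaussian with parameter of order $e_{n-1}(T,d)$. The standard bound $\E[\max_{i\le N}Y_i]\le C\sigma\sqrt{\log N}$ for centered sub-Gaussian $Y_i$ of parameter $\sigma$ (itself obtained from \eqref{subGauss} by a union bound and optimization over the tail threshold) then yields
$$\E\Big[\max_t\big(X_{\pi_n(t)}-X_{\pi_{n-1}(t)}\big)\Big]\le C\,e_{n-1}(T,d)\sqrt{\log\big(2^{2^{n+1}}\big)}\le C'\,2^{n/2}e_{n-1}(T,d).$$
Summing over $n\ge 1$ and reindexing $m=n-1$ gives $\E[\sup_t X_t]\le C'\sum_{n\ge 1}2^{n/2}e_{n-1}(T,d)=C'\sqrt{2}\sum_{m\ge 0}2^{m/2}e_m(T,d)$, which is \eqref{ent}.

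The hard part is the bookkeeping in the chaining, namely the observation that the number of distinct increments at level $n$ is doubly exponential, $|T_n|\,|T_{n-1}|\le 2^{2^{n+1}}$, so that $\sqrt{\log N}\sim 2^{n/2}$; this is exactly what converts the cardinality constraint $|T_n|\le 2^{2^n}$ in the definition of $e_n$ into the geometric weight $2^{n/2}$ in the sum. The sub-Gaussian maximal inequality is routine, and the reduction to finite (or separable) $T$ and the convergence of the telescoping chain are standard technical points.
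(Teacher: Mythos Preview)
Your chaining argument is correct and is indeed the standard proof of Dudley's bound in the entropy-number formulation. The paper, however, does not prove this theorem at all: it is stated as a known result, attributed to Dudley and formulated following Talagrand, and is simply invoked as a tool in the proof of Theorem~\ref{dist-bdX}. So there is no ``paper's own proof'' to compare against; your write-up supplies the argument that the paper omits, and it does so in the expected way.
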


We now give the proof of the main theorem.

\begin{proof}[Proof of Theorem \ref{dist-bdX}]
As in \cite{JOTP}, the key initial step is to view the
distance as the supremum of a stochastic process: let
$X_f=X_f(\theta):=\E_Xf(X_\theta)- \E f(X_\Theta)$.  Then $\{X_f\}_f$ is a
centered stochastic process indexed by the unit ball of $\|\cdot\|_{BL}$, 
and $d_{BL}(X_\theta,X_{\Theta})=\sup_{\|f\|_{BL}\le 1}X_f$.  The fact
that Haar measure on $\mathfrak{W}_{d,k}$ has a measure-concentration 
property for Lipschitz functions (see \cite{MilSch})
implies that $X_f$ is a sub-Gaussian
process, as follows.

Let $f:\R^k\to\R$ be Lipschitz with Lipschitz constant $L$ and
 consider the 
 function $G=G_f$ defined on $\mathfrak{W}_{d,k}$ by 
 $$G(\theta_1,\ldots,\theta_k)=\E_X f(X_\theta)=
\E\left[f(\inprod{\theta_1}{X},\ldots,\inprod{\theta_k}{X})\big|\theta\right].$$
Then
\begin{equation*}\begin{split}
\Big|G(\theta)-G(\theta')\Big|&=
\left|\E\left[f\big(\inprod{X}{\theta_1'},\ldots,\inprod{X}{\theta_k'}\big)-
f\big(\inprod{X}{\theta_1},\ldots,\inprod{X}{\theta_k}\big)\Big|\theta,\theta'
\right]\right|\\&\le L\E\left[\big|\big(\inprod{X}{\theta_1'-\theta_1},\ldots,
\inprod{X}{\theta_k'-\theta_k}\big)\big|\Big|\theta,\theta'\right]
\\&\le L \sqrt{\sum_{j=1}^k|\theta_j'-\theta_j|^2\E\inprod{X}{\frac{
\theta_j'-\theta_j}{|\theta_j'-\theta_j|}}^2}\\&\le
L\rho(\theta,\theta')\sqrt{B},
\end{split}\end{equation*}
thus $G(\theta)$ is a Lipschitz function on 
$\mathfrak{W}_{k,d}$, with Lipschitz constant $L\sqrt{B}$.
It follows immediately
from Theorem 6.6 and remark 6.7.1 of \cite{MilSch} that 
$$\P_{d,k}\left[\left|G(\theta)-M_G\right|>\epsilon\right]\le \sqrt{\frac{\pi}{2}}
e^{-\frac{d\epsilon^2}{8L^2B}},$$
where $M_G$ is the median of $G$ with respect to Haar measure on 
$\mathfrak{W}_{d,k}$.  It is then a straightforward exercise to show that
for some universal constant $C$, 
\begin{equation}\begin{split}\label{GconcX}
\P\left[\left|G(\theta)-\E G(\theta)\right|>\epsilon\right]
&\le Ce^{-\frac{d\epsilon^2}{32L^2B}}.
\end{split}\end{equation}
Observe that, for $\Theta$ a Haar-distributed random point of $\mathfrak{W}_{
d,k}$, $\E G(\Theta)=\E f(X_\Theta)$,
and so \eqref{GconcX} can be restated as
$\P\left[|X_f|>\epsilon\right]\le
C\exp\left[-cd\epsilon^2\right].$

Note that 
$X_f-X_g=X_{f-g},$
thus for $|f-g|_L$ the Lipschitz
constant of $f-g$ and $\|f-g\|_{BL}$ the bounded-Lipschitz norm of $f-g$,
$$\P\left[\big|X_f-X_g\big|>\epsilon\right]\le C\exp\left[\frac{-cd\epsilon^2}{
2|f-g|_L^2}\right]\le C\exp\left[\frac{-cd\epsilon^2}{
2\|f-g\|_{BL}^2}\right].$$
The process $\{X_f\}$
therefore satisfies the sub-Gaussian increment condition in the metric 
$d^*(f,g):=\frac{1}{\sqrt{cd}}\|f-g\|_{BL}$;  in particular,
 the entropy bound \eqref{ent} applies.  We will not be able to 
apply it directly, but rather use a sequence of approximations to arrive
at a bound.

The first step is to truncate the indexing functions.  
Let 
$$\varphi_R(x)=\begin{cases}1&|x|\le R,\\R+1-|x|&R\le|x|\le R+1,\\0&R+1\le|x|,
\end{cases}$$
and define $f_R:=f\cdot\varphi_R$.  It is easy to see that if $\|f\|_{BL}\le 1$, then $\|f_R\|_{BL}
\le2$.  
Since $|f(x)-f_R(x)|=0$ if $x\in B_R$ and $|f(x)-f_R(x)|\le 1$ for 
all $x\in\R^k$, 
$$\big|\E_X f(X_\theta)-\E_X f_R(X_\theta)\big|\le\P\big[|X_\theta|>R\big|
\theta\big]\le
\frac{1}{R^2}\sum_{i=1}^k\E\big[
\inprod{X}{\theta_i}^2\big]\le\frac{Bk}{R^2},$$
and the same holds if $\E_X$ is replaced by $\E$.
It follows that 
$\left|X_f-X_{f_R}\right|\le\frac{2Bk}{R^2}.$
Consider therefore the process $X_f$ indexed by $BL_{2,R+1}$ (with norm
$\|\cdot\|_{BL}$), for some
choice of $R$ to be determined, where
$$BL_{2,R+1}:=\left\{f:\R^k\to\R:\|f\|_{BL}\le 2; f(x)=0\,{\rm if}\,|x|>R+1
\right\};$$
what has been shown is that 
\begin{equation}\label{trunc_error}
\E\Big[\sup_{\|f\|_{BL}\le 1}X_f\Big]\le\E\Big[\sup_{f\in BL_{2,R+1}}X_f
\Big]+\frac{2Bk}{R^2}.
\end{equation}

The next step is to approximate functions in $BL_{2,R+1}$ by
``piecewise linear'' functions.  Specifically, consider a cubic
lattice of edge length $\epsilon$ in $\R^k$.  Triangulate each cube of
the lattice into simplices inductively as follows: in $\R^2$, add an
extra vertex in the center of each square to divide the square into
four triangles.  To triangulate the cube of $\R^k$, first triangulate
each facet as was described in the previous stage of the induction.
Then add a new vertex at the center of the cube; connecting it to each 
of the vertices of each of the facets gives a triangulation into simplices.
Observe that when this procedure is carried out, each new vertex added is
on a cubic lattice of edge length $\frac{\epsilon}{2}$.  Let $\mathcal{L}$
denote the supplemented lattice comprised of the original cubic lattice,
together with the additional vertices needed for the triangulation.  The 
number of sites of $\mathcal{L}$ within the ball of radius $R+1$
is then bounded by, e.g., $c\left(\frac{3
R}{\epsilon}\right)^k\omega_k$, where $\omega_k$ is the volume of the unit
ball in $\R^k$.

Now approximate $f\in BL_{2,R+1}$ by the function $\tilde{f}$ defined
such that $\tilde{f}(x)=f(x)$ for $x\in\mathcal{L}$, and the graph
of $\tilde{f}$ is determined by taking the convex hull of the vertices
of the image under $f$ of each $k$-dimensional simplex determined by
$\mathcal{L}$.  The resulting function $\tilde{f}$ still has
$\|\tilde{f}\|_{BL}\le 2$, and $\|f-\tilde{f}\|_\infty\le\frac{\epsilon
\sqrt{k}}{2}$,
since the distance between points in the same simplex is bounded by
$\epsilon\sqrt{k}$.  Moreover, $\|\tilde{f}\|_{BL}=\sup_{x\in
\mathcal{L}}|f(x)|+\sup_{x\sim y}\frac{|f(x)-f(y)|}{|x-y|}$,
where $x\sim y$ if $x,y\in\mathcal{L}$ and $x$ and $y$ are part of the 
same triangulating simplex.
Observe that, for a given $x\in\mathcal{L}$, those vertices which are 
part of a triangulating simplex with $x$ are all contained 
 in a cube centered at $x$
of edge length $\epsilon$; the number of such points is thus bounded by
$3^k$, and the number of differences which must be considered in order to
compute the Lipschitz constant of $\tilde{f}$ is therefore bounded by
$c\left(\frac{9
R}{\epsilon}\right)^k\omega_k$.  Recall that $\omega_k\sim\frac{2}{\sqrt{k\pi}
}\left[
\frac{2\pi e}{k}\right]^{\frac{k}{2}}$ for large $k$, and so the number 
of differences determining the Lipschitz constant of $\tilde{f}$ is bounded
by $\frac{c}{\sqrt{k}}\left(\frac{c'R}{\epsilon\sqrt{k}}\right)^k$, for 
some absolute constants $c,c'$.  It follows that
\begin{equation}\label{linearized}
\E\Big[\sup_{f\in BL_{2,R+1}}X_f\Big]\le\E\Big[\sup_{f\in BL_{2,R+1}}X_{\tilde{f}}
\Big]+\epsilon\sqrt{k},
\end{equation}
that the process $\{X_{\tilde{f}}\}_{f\in BL_{2,R+1}}$ is sub-Gaussian with respect
to $\frac{1}{\sqrt{cd}}\|\cdot\|_{BL}$, 
and that the values of $\tilde{f}$ for $f\in BL_{2,R+1}$ are determined
by a point of the ball $2B_\infty^M$ of $\ell_\infty^M$, where 
\begin{equation}\label{Mbound}
M= \frac{c}{\sqrt{k}}\left(\frac{c'R}{\epsilon\sqrt{k}}\right)^k.
\end{equation}

The virtue of this approximation is that it replaces a sub-Gaussian process
indexed by a ball in an infinite-dimensional
space with one indexed by a ball in a finite-dimensional space, where 
Dudley's bound is finally to be applied.  
Let $T:=\left\{\tilde{f}:f\in BL_{2,R+1}\right\}\subseteq 2B_\infty^M$;
the covering numbers
of the unit ball $B$ of a finite-dimensional normed space $(X,\|\cdot\|)$
of dimension $M$ 
are known (see Lemma 2.6 of \cite{MilSch}) to be bounded as
$\mathcal{N}(B,\|\cdot\|,\epsilon)\le \exp\left[M\log\left(\frac{3}{
\epsilon}\right)\right].$
This implies that 
$$\mathcal{N}(B^M_\infty,\rho,\epsilon)\le \exp\left[M\log\left(\frac{3
}{\epsilon\sqrt{cd}}\right)\right],$$
which in turn implies that
$$e_n(2B_\infty^M,\rho)\le\frac{24\sqrt{B}}{\sqrt{d}}
2^{-\frac{2^n}{M}}.$$
Applying Theorem \ref{ent-thm} now yields
\begin{equation}\label{ent-est}
\E\left[\sup_{f\in BL_{2,R+1}}X_{\tilde{f}}\right]\le L\sum_{n\ge 0}
\left(\frac{24\sqrt{B}}{\sqrt{d}}2^{
\left(\frac{n}{2}-\frac{2^n}{M}\right)}\right).
\end{equation}
Now, for the terms in the sum with $\log(M)\le (n+1)\log(2)-3\log(n)$, 
the summands are bounded above by $2^{-n}$, contributing only a constant
to the upper bound.  On the other hand, the summand is maximized for 
$2^n=\frac{M}{2}\log(2)$, and is therefore bounded by $\sqrt{M}$.
Taken together, these estimates show that the sum on the right-hand
side of \eqref{ent-est} is bounded by $L\log(M)\sqrt{\frac{MB}{d}}$.

Putting all the pieces together, 
$$\E\left[\sup_{\|f\|_{BL}\le 1}\left(\E\left[f(X_\Theta)\big|\Theta\right]-
    \E f(X_\Theta)\right)\right]\le
\frac{9kB}{R^2}+2\epsilon\sqrt{k}+L\log(M)\sqrt{\frac{ MB}{d}}.$$
Choosing $\epsilon=\frac{\sqrt{k}B}{2R^2}$ and using the value of $M$
in terms of $R$ yields
$$\E\left[\sup_{\|f\|_{BL}\le 1}\left(\E\left[f(X_\Theta)\big|\Theta\right]-
\E f(X_\Theta)
\right)\right]\le \frac{10kB}{R^2}+Lk\log\left(\frac{c' R^3}{
kB}\right)\frac{c}{k^{1/4}}\left[\frac{c' R^3}{kB}\right]^{
\frac{k}{2}}\sqrt{\frac{
B}{d}}.$$
Now choosing $R=cd^{\frac{1}{3k+4}}k^{\frac{2k+1}{6k+8}}B^{\frac{k+1}{3k+4}}$
yields
\[\E\left[\sup_{\|f\|_{BL}\le 1}\left(\E\left[f(X_\Theta)\big|\Theta\right]-
\E f(X_\Theta)
\right)\right]\le L\frac{kB+B\log(d)}{d^{\frac{2}{3k+4}}
k^{\frac{2k+1}{3k+4}}B^{\frac{2k+2}{3k+4}}}.\]
This completes the proof of the first statement of the theorem.  
The second follows immediately using that $B=1$ and observing that, 
under the assumption that 
$A\le C'\sqrt{d}$, the 
bound above is always worse than the error
$\frac{\sigma[\sqrt{k}(A+1)+ k]}{d-1}$ 
coming from Theorem \ref{meanX}.

\end{proof}

The proof of Corollary \ref{summary} is essentially immediate from
Theorems \ref{concX} and \ref{dist-bdX}.

\bibliographystyle{plain}
\bibliography{sharp_projections}

\end{document}